\def\imod#1{\allowbreak\mkern10mu({\operator@font mod}\,\,#1)}
\newtheorem{theorem}{Theorem}[section]
\newtheorem{proposition}{Proposition}[section]
\newtheorem{lemma}{Lemma}[section]
\newtheorem{corollary}{Corollary}[section]
\theoremstyle{definition}
\newtheorem{definition}{Definition}[section]
\title{A Constructive Lower Bound on Szemer\'edi's Theorem}
\author{Vladislav Taranchuk\thanks{Dept.\ of Mathematics and Statistics, Sacramento State University, {\tt vtaranchuk@csus.edu}} \and 
}
\begin{document}
\maketitle

\begin{abstract}
Let $r_k(n)$ denote the maximum cardinality of a set $A \subset \{1,2, \dots, n \}$ such that $A$ does not contain a $k$-term arithmetic progression. In this paper, we give a method of constructing such a set and prove the lower bound $n^{1-\frac{c_k}{k \ln k}} < r_k(n)$ where $k$ is prime, and $c_k \rightarrow 1$ as $k \rightarrow \infty$. This bound is the best known for an  increasingly large interval of $n$ as we choose larger and larger $k$.  We also demonstrate that one can prove or disprove a conjecture of Erd\H{o}s on arithmetic progressions in large sets once tight enough bounds on $r_k(n)$ are obtained.
\end{abstract}

\section{Introduction}
In 1927, van der Waerden \cite{VDW} proved that for all positive integers $r$ and $k$, there exists a positive integer $N$ such that every $r$-partitioning of $\{1,2, \dots, N  \}$ contains a $k$-term arithmetic progression in one of the parts of the partition. The smallest such integer $N$ is denoted $w(r, k)$. Van der Waerden's theorem has become the cornerstone of what is known today as the study of Ramsey theory on the integers. 


In 1936, Erd\H{o}s and Tur\'an \cite{Erdos} conjectured that every set of consecutive integers with a positive natural density contains arbitrarily long arithmetic progression. An equivalent restatement of the conjecture claims that every set that does not contain arbitrarily long arithmetic progressions has a natural density that must converge to zero. 
Let $r_k(n)$ denote the maximum cardinality of a set $A \subset \{1,2, \dots, n \}$ such that $A$ does not contain a $k$-term arithmetic progression. Proving that for some positive integer $k$, $\frac{r_k(n)}{n} \rightarrow 0$ as $n \rightarrow \infty$ would prove the conjecture of Erd\H{o}s and Tur\'an to be true. 

In 1953, Klaus Roth \cite{Klaus} made the first step towards solving this conjecture as he proved the upper bound
$$
r_3(n) < \frac{cn}{\log \log n}.
$$
This was followed by an upper bound on $r_4(n)$ obtained by Endre Szemer\'edi \cite{Sz69} in 1969. In 1975, Szemer\'edi \cite{Sz75} proved the conjecture to be true in general by a clever extension of his proof for the case $k= 4$. 

To this day the true growth rate of $r_k(n)$ is still unknown and the problem is still widely studied. Obtaining the true growth rate on $r_k(n)$ has important consequences as it can be used to prove or disprove a famous conjecture by Erd\H{o}s \cite{Erdos} on arithemtic progressions. This conjecture states that if the sum of the reciprocals of a subset of the natural numbers diverges, then the set contains arbitrarily long arithmetic progressions. This conjecture has become of great interest in recent years as Ben Green and Terrence Tao \cite{primes} proved a special case of this conjecture by showing that the primes contain arbitrarily long arithmetic progressions. 

Currently, the best known general upper bound on $r_k(n)$ is due to Gowers \cite{Gowers}. In 2001, Gowers used fourier analysis and combinatorics to prove that 
$$
r_k(n) < \frac{n}{(\log \log n)^{2^{2^{k+9}}}}.
$$
The best known general lower bound is given by Kevin O'Bryant who built upon earlier results by Behrend \cite{Behr}, Rankin \cite{Rank}, and Elkin \cite{elk}. In 2011, O'Bryant \cite{Kevin} proved that 
$$
\frac{cn}{2^{a2^{(a-1)/2}\sqrt[a]{\log n} - \frac{\log \log n}{2a}}} < r_k(n)
$$
where $a = \lceil \log k \rceil$. In this paper, we provide a new recursive construction that gives a lower bound on $r_k(n)$. More specifically, in Section 2 we prove that if $k$ is a prime, then
$$
r_k(n)(k-1) \leq r_k(nk).
$$
We then use this to obtain the following theorem.

\noindent{\bf Theorem \ref{mainthem}.} \
If $n$ is positive integer and $k$ is a prime, then 
$$
n^{1-\frac{c_k}{k \ln k}} < r_k(n)
$$
where $c_k \rightarrow 1$ as $k \rightarrow \infty$.

We obtain this bound by modifying a construction by Blankenship, Cummings, and Taranchuk \cite{me} that was used to prove a recursive lower bound on the van der Waerden numbers. In particular they proved that if $p$ is a prime and $p\leq k$ then 
$$
 w(r, k) > p \cdot \left(w\left(r - \left\lceil \frac{r}{p}\right\rceil, k\right) -1\right).
$$
Our theorem provides the best known bound for  $n < ck^{k^{3/2} \log k}$, however O'Bryant's bound is better as $n \rightarrow \infty$. 

In recent years, extensive research of cases $k =3$ and $k=4$ has yielded tighter bounds on $r_k(n)$.
The case $k=3$ has been of particular interest and the bounds on $r_3(n)$ have seen steady, incremental improvements through the years. Currently, the best known bounds are
$$
\frac{n}{2^{\sqrt{8\log n}}} < r_3(n) < \frac{cn(\log \log n)^4}{\log n}.
$$
The lower bound is due to O'Bryant \cite{Kevin} and the upper bound was given by Thomas Bloom \cite{Bloom} in 2016. In 2017, Ben Green and Terrence Tao \cite{r4} provided the upper bound
$$
r_4(n) < \frac{c_1n}{(\log n)^{c_2}}
$$
for absolute constants $c_1$ and $c_2$.




In the following section we prove our main theorem and in Section 4 we discuss a potential method for improving our bound.

\section{Proof of the Main Theorem}

\begin{definition}
Here are some useful definitions.
\begin{enumerate}
\item The acronym $k$-AP stands for $k$-term arithmetic progression.
\item Let $a_1, a_2, \dots, a_k$ be an arithmetic progression. Then we call the difference between any two consecutive elements, $a_i -a_{i-1} =d$ the \textbf{common difference}.
\end{enumerate}
\end{definition}

\begin{definition}\label{Construct}
Let $k$ be a prime and $A$ be any $k$-AP free subset of $\{ 1, 2, \dots, n  \}$. Define \newline $A_k = \{(a-1)k +1, (a-1)k +2, \dots, (a-1)k + (k-1): a \in A \}$.
\end{definition}

As a quick example, let $k = 3$ and
$$
A = \{ 1, 3, 4, 6\}.
$$
Then
$$
A_3 = \{1,2,7,8,10,11,16,17 \}.
$$
Note that this method of construction is equivalent to replacing each term in $A$ with a consecutive sequence of length $k$, and then excluding the last element. This excluded element is always a multiple of $k$ and thus, $A_k$ contains no elements that are congruent to 0(mod $k$).
This definition is the basis for our recursive construction. Our next lemma will prove a key property about $A_k$.

\begin{lemma}\label{expand}
If $A$ is any $k$-AP free set, then $A_k$ is also $k$-AP free.
\end{lemma}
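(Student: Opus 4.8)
The plan is to argue by contradiction: suppose $A_k$ contains a $k$-AP, say $b, b+d, b+2d, \dots, b+(k-1)d$ with common difference $d \geq 1$, and derive a contradiction with the assumption that $A$ is $k$-AP free. The first step is to understand the structure of $A_k$ modulo $k$. By Definition \ref{Construct}, every element of $A_k$ has the form $(a-1)k + j$ for some $a \in A$ and some $j \in \{1, 2, \dots, k-1\}$; in particular $A_k$ contains no multiple of $k$, and the ``block'' of $A_k$ coming from $a \in A$ occupies the interval $[(a-1)k+1, (a-1)k+(k-1)]$.

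The heart of the argument is to show that the common difference $d$ of the putative $k$-AP in $A_k$ must be a multiple of $k$. Consider the residues $b + id \imod{k}$ for $i = 0, 1, \dots, k-1$. If $d \not\equiv 0 \imod k$, then since $k$ is prime, $d$ is invertible mod $k$, so as $i$ ranges over $\{0,1,\dots,k-1\}$ the values $b+id$ range over all residue classes mod $k$ — in particular, one of them is $\equiv 0 \imod k$. But no element of $A_k$ is divisible by $k$, a contradiction. Hence $k \mid d$, say $d = k d'$ with $d' \geq 1$.

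Now I would push the progression down to $A$. Write $b = (a-1)k + j$ with $a \in A$ and $1 \le j \le k-1$. Then for each $i$, $b + id = (a-1)k + j + ikd' = (a + id' - 1)k + j$, which lies in the block of index $a + id'$; since $b+id \in A_k$ and $1 \le j \le k-1$, this forces $a + id' \in A$ for every $i = 0, 1, \dots, k-1$ (this is where I use that each block of $A_k$ is nonempty exactly when its index is in $A$, and that the residue $j$ uniquely identifies both the block index and membership). Therefore $a, a+d', a+2d', \dots, a+(k-1)d'$ is a $k$-term arithmetic progression with common difference $d' \geq 1$ lying entirely in $A$, contradicting that $A$ is $k$-AP free.

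The main obstacle is the bookkeeping in the last step: one must verify carefully that $b + id$ being an element of $A_k$ (and not merely an integer in the right interval) really does certify $a + id' \in A$ — i.e., that the map sending $a \in A$ to its block is injective and that an element $(m-1)k + j$ with $1 \le j \le k-1$ belongs to $A_k$ if and only if $m \in A$. This is immediate from the definition but deserves to be stated explicitly, since it is the linchpin that lets the contradiction descend from $A_k$ back to $A$. The primality of $k$ is used only to guarantee that a common difference not divisible by $k$ hits every residue class mod $k$.
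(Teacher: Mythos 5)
Your proof is correct and follows essentially the same two-case structure as the paper's: primality of $k$ forces a common difference $d\not\equiv 0\pmod k$ to hit the forbidden residue class $0\pmod k$, and a difference $d\equiv 0\pmod k$ descends to a $k$-AP of block indices inside $A$. You actually fill in the descent step (writing $d=kd'$ and tracking the fixed residue $j$) more carefully than the paper, which only gestures at it.
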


\begin{proof}
We split this proof into two cases. 

\noindent \emph{Case 1}: Assume there exists a $k$-AP in $A_k$ such that the common difference of this $k$-AP is $d$, and $d \equiv 0$(mod $k$). This implies that for some positive integer $m$, $d = mk$.
Such an AP would be representative of finding an AP with common difference $m$ in the original set $A$, which we defined as being $k$-AP free. This is due to the expansion of each term in $A$ to a block of $k-1$ elements in $A_k$. Thus, this is a contradiction. 

\noindent \emph{Case 2}: Assume there exists a $k$-AP in $A_k$ such that the common difference of this $k$-AP is $d$, and $d \not\equiv 0$(mod $k$). Recall that since $k$ is a prime and if $i$ is an arbitrary integer such that $i \not\equiv 0$(mod $k$), then $i \not\equiv 2i \not\equiv \dots \not\equiv ki$(mod $k$). This is important because it implies that multiples of $i$ must first cycle through all possible congruence classes mod $k$. Similarly, if the common difference $d$ of a $k$-AP is not congruent to 0(mod $k$), then each element in the $k$-AP is in a unique congruence class mod $k$. More importantly, there must be an element in every congruence class of $k$ for such a $k$-AP. However, note that the definition of $A_k$ excludes elements congruent to 0(mod $k$). Thus, this would also be a contradiction. So $A_k$ is $k$-AP free.
\end{proof}

\begin{corollary}\label{recursive}
If $n$ is a positive integer and $k$ is a prime, then
$$
r_k(n)(k-1) \leq r_k(nk).
$$
\end{corollary}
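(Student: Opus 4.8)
The plan is to exhibit, for each $n$, an explicit $k$-AP-free subset of $\{1,2,\dots,nk\}$ of size exactly $(k-1)\,r_k(n)$; the inequality then follows immediately from the definition of $r_k(nk)$ as a maximum. The natural candidate is the set $A_k$ from Definition \ref{Construct}, where $A \subseteq \{1,2,\dots,n\}$ is chosen to be $k$-AP free with $|A| = r_k(n)$.

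First I would check that $A_k \subseteq \{1,2,\dots,nk\}$: the largest possible element of $A_k$ is $(a-1)k + (k-1)$ with $a \le n$, which is at most $(n-1)k + (k-1) = nk - 1 < nk$, and the smallest is $(a-1)k+1 \ge 1$. Next I would count $|A_k|$. Each $a \in A$ contributes the block $B_a = \{(a-1)k+1,\dots,(a-1)k+(k-1)\}$ of exactly $k-1$ integers, all lying strictly between the consecutive multiples $(a-1)k$ and $ak$. Hence for distinct $a < a'$ in $A$ the blocks $B_a$ and $B_{a'}$ are disjoint, since $\max B_a \le ak - 1 < (a'-1)k + 1 \le \min B_{a'}$. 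Therefore $|A_k| = (k-1)|A| = (k-1)\,r_k(n)$.

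Finally, $A_k$ is $k$-AP free by Lemma \ref{expand}. Combining the three observations, $A_k$ is a $k$-AP-free subset of $\{1,2,\dots,nk\}$ of cardinality $(k-1)\,r_k(n)$, so by maximality $r_k(nk) \ge |A_k| = (k-1)\,r_k(n)$, which is the claimed inequality.

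There is essentially no hard step here: the substance of the argument is entirely contained in Lemma \ref{expand}, and what remains is the bookkeeping of the block decomposition. The only point requiring a moment's care is the disjointness of the blocks $B_a$, which is what guarantees the count $(k-1)\,r_k(n)$ rather than something smaller; this in turn relies on the fact that each block omits the multiple of $k$ at its right end and so stays within a single gap between consecutive multiples of $k$.
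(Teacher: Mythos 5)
Your proof is correct and follows essentially the same route as the paper: take a $k$-AP-free $A \subseteq \{1,\dots,n\}$ of maximum size, form $A_k$, apply Lemma \ref{expand}, and count. The paper's version states $|A_k| = (k-1)|A|$ and $A_k \subseteq \{1,\dots,nk\}$ without verification, whereas you spell out both the block-disjointness and the range check --- a useful tightening of the bookkeeping, but not a different argument.
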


\begin{proof}
Recall that in Definition \ref{Construct}, we replace each element in $A$ with $k-1$. This implies that $|A_k| = |A|(k-1)$. By Lemma \ref{expand}, we have that if $A$ is $k$-AP free then so is $A_k$. Thus, if we set $|A| = r_k(n)$, then $|A_k| = r_k(n)(k-1) \leq r_k(nk)$.
\end{proof}

\begin{theorem}\label{mainthem}
If $n$ is a positive integer and $k$ is a prime, then 
$$
n^{1-\frac{c_k}{k \ln k}} < r_k(n)
$$
where $c_k \rightarrow 1$ as $k \rightarrow \infty$.
\end{theorem}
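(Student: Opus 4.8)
The plan is to iterate Corollary~\ref{recursive} to obtain a lower bound on $r_k$ at powers of $k$, and then interpolate to arbitrary $n$. Starting from a trivial base case, say $r_k(1) \geq 1$ (or better, $r_k(k-1) \geq k-1$, since $\{1,2,\dots,k-1\}$ is $k$-AP free), repeated application of $r_k(m k) \geq (k-1)\, r_k(m)$ gives, after $j$ steps,
\begin{equation*}
r_k\!\left(k^{\,j}\right) \;\geq\; (k-1)^{\,j}.
\end{equation*}
Writing $n = k^{\,j}$, so that $j = \log_k n = \frac{\ln n}{\ln k}$, the right-hand side becomes $(k-1)^{j} = n^{\,\log_k(k-1)} = n^{\,\ln(k-1)/\ln k}$. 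Thus for $n$ a power of $k$ we already get $r_k(n) \geq n^{\alpha_k}$ with $\alpha_k = \frac{\ln(k-1)}{\ln k}$, and the bulk of the work is to rewrite this exponent in the form $1 - \frac{c_k}{k\ln k}$ and to remove the restriction that $n$ be a power of $k$.

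For the exponent, I would compute
\begin{equation*}
1 - \alpha_k \;=\; 1 - \frac{\ln(k-1)}{\ln k} \;=\; \frac{\ln k - \ln(k-1)}{\ln k} \;=\; \frac{-\ln\!\left(1 - \tfrac{1}{k}\right)}{\ln k}.
\end{equation*}
Using the expansion $-\ln(1 - \tfrac1k) = \tfrac1k + \tfrac{1}{2k^2} + \cdots = \tfrac1k\bigl(1 + O(\tfrac1k)\bigr)$, this is $\frac{1}{k\ln k}\bigl(1 + O(\tfrac1k)\bigr)$, so setting $c_k := k\ln k\,(1-\alpha_k) = \frac{-k\ln(1-1/k)}{1}\cdot\frac{1}{1} $, i.e. $c_k = -k\ln\!\left(1-\tfrac1k\right)$, we get exactly $1 - \alpha_k = \frac{c_k}{k\ln k}$ with $c_k \to 1$ as $k\to\infty$ (indeed $c_k = 1 + \tfrac{1}{2k} + \cdots > 1$). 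So for $n$ a power of $k$ the bound $n^{1 - c_k/(k\ln k)} \leq r_k(n)$ holds, with the inequality being strict once one is slightly careful with the base case.

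To pass to general $n$, given arbitrary $n$ let $j = \lfloor \log_k n \rfloor$, so $k^{\,j} \leq n < k^{\,j+1}$. Since $r_k$ is nondecreasing, $r_k(n) \geq r_k(k^{\,j}) \geq (k-1)^{\,j}$. Now $(k-1)^{\,j} = k^{\,j\alpha_k}$ and $k^{\,j} > n/k$, so $k^{\,j\alpha_k} > (n/k)^{\alpha_k} = n^{\alpha_k}/k^{\alpha_k} \geq n^{\alpha_k}/k$. This loses a factor of $k$, which is harmless: one checks that for $n$ large enough (depending on $k$) the factor $n^{(1-\alpha_k)}$ — wait, rather $n^{\alpha_k}/k$ still exceeds $n^{\alpha_k - \varepsilon}$ for any $\varepsilon>0$ once $n$ is large, but to land exactly on the stated exponent one should instead absorb the loss by a marginally weaker constant, or handle small $n$ separately. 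The cleanest route: show $(k-1)^{j} \geq n^{\,\alpha_k'}$ for a slightly smaller $\alpha_k' = 1 - \frac{c_k'}{k\ln k}$ still satisfying $c_k' \to 1$; the factor-$k$ loss changes $c_k$ by an amount that vanishes as $n\to\infty$ and can be folded into the $o(1)$ defining $c_k \to 1$, provided one allows $c_k$ to depend mildly on the range of $n$, or one simply states the theorem for $n \geq n_0(k)$ and notes the finitely many small cases are trivial.

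The main obstacle is not any deep step — the recursion does all the real work — but the bookkeeping in this last interpolation: one must be honest about the factor-of-$k$ loss when $n$ is not a power of $k$ and confirm it is compatible with the claim $c_k \to 1$, and one must pin down the base case carefully enough to get a strict inequality rather than just $\leq$. I would therefore spend most of the write-up making the chain $r_k(n) \geq r_k(k^{\lfloor\log_k n\rfloor}) \geq (k-1)^{\lfloor\log_k n\rfloor} \geq n^{1 - c_k/(k\ln k)}$ rigorous, with the definition $c_k = -k\ln(1-1/k)$ (adjusted if needed to swallow the interpolation loss) and a one-line verification that $c_k \to 1$.
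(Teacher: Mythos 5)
Your proposal follows the paper's proof essentially verbatim: iterate Corollary~\ref{recursive} from the base case $r_k(k)=k-1$ to obtain $(k-1)^j \le r_k(k^j)$, then rewrite the exponent, and your $c_k = -k\ln(1-1/k)$ is algebraically identical to the paper's $c_k = \ln\bigl((k/(k-1))^k\bigr)$. Your careful flagging of the factor-of-$k$ loss when $n$ is not a power of $k$ is, if anything, more honest than the paper, which disposes of that case with a single unjustified sentence about truncating the construction of size $k^r$ to the elements at most $n$.
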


\begin{proof}
It is known that $k-1 = r_k(k)$. By Corollary \ref{recursive} we obtain that $(k-1)^2 \leq r_k(k^2)$. Continuing this recursive process of construction we obtain that for any positive integer $r$, $(k-1)^r \leq r_k(k^r)$. 

Let $n = k^r$, so $r= \log_k(n) = \frac{\ln n}{\ln k}$. Define $f_k(n) = (k-1)^r < r_k(n)$. Considering the ratio of $n$ over $f_k(n)$, we obtain that 
$$
\frac{n}{f_k(n)} = \left(\frac{k}{k-1}\right)^r = \left(\frac{k}{k-1}\right)^{k \cdot \frac{r}{k}} = e ^{\left( c_k \cdot \frac{r}{k}\right)}
$$
where $c_k = \ln \left( (\frac{k}{k-1})^k \right)$. Note that $c_k \rightarrow 1$ as $k \rightarrow \infty$, since
$$
\lim_{k\to\infty} \left(\frac{k}{k-1}\right)^k = e.
$$
Thus, we obtain that 
$$
\frac{n}{f_k(n)} = e ^{\left( c_k \cdot \frac{r}{k}\right)} = n^{\left(\frac{c_k}{k \ln k}\right)} \implies 
f_k(n) = n^{1-\frac{c_k}{k \ln k}}.
$$
If $k^{r-1} < n < k^r$, then we can obtain the construction of size $k^r$ and only consider the elements up to $n$. 
\end{proof}

By comparing the bound obtained by Theorem \ref{mainthem} and O'Bryant's bound, it is easy to check that Theorem \ref{mainthem} is the best known lower bound on $r_k(n)$ for all $n < ck^{k^{3/2}\log k}$.  

\section{Erd\H{o}s's Conjecture and Szemer\'edi's Theorem}
A set of positive integers is called \textbf{large} if the sum of the reciprocals of its elements diverges, otherwise the set is called \textbf{small}. Erd\H{o}s conjectured that every large set contains arbitrarily long arithmetic progressions. The contrapositive of the conjecture is as follows; if there exists a $k$ such that a set $A$ does not contain a $k$-AP, then $A$ is small. The only progress to be made on this conjecture was made by Green and Tao(cite), who proved that the primes contain arbitrarily long arithmetic progressions. Although a relationship between Szemer\'edi's theorem and this conjecture is clear, there has been no explicit connection made. 
In this section we show an intimate relationship between these ideas.

\begin{lemma}\label{inverses}
If $f(n)$, $g(n)$ and $h(n)$ are unbounded monotonically increasing functions with
$$
f(n) < g(n) < h(n)
$$
then 
$$
f^{-1}(n) > g^{-1}(n) > h^{-1}(n).
$$
\end{lemma}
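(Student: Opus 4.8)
The plan is to prove this directly from the definitions of monotonically increasing functions and their inverses, being careful about what "inverse" means for functions on the integers (or reals). First I would fix a value $n$ in the common range of the three functions and set $a = f^{-1}(n)$, $b = g^{-1}(n)$, $c = h^{-1}(n)$; by definition these are the arguments at which $f$, $g$, $h$ respectively attain (or first attain) the value $n$. The goal is then to show $a > b > c$, and by symmetry it suffices to prove one inequality, say $g^{-1}(n) > h^{-1}(n)$, i.e. $b > c$.

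The key step is a monotonicity-reversal argument. Suppose for contradiction that $b \le c$. Since $g$ is monotonically increasing, $b \le c$ gives $g(b) \le g(c)$. But $g(b) = n$ and, by hypothesis $g < h$ pointwise, we have $g(c) < h(c) = n$. Chaining these, $n = g(b) \le g(c) < h(c) = n$, which is a contradiction. Hence $b > c$. The identical argument with the pair $(f,g)$ in place of $(g,h)$ yields $a > b$, and combining gives $f^{-1}(n) > g^{-1}(n) > h^{-1}(n)$.

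I expect the only real subtlety — and the main obstacle — to be making the notion of $f^{-1}$ precise when $f$ is defined on the integers and need not be strictly increasing or surjective, so that $f^{-1}(n)$ should be read as something like $\min\{ m : f(m) \ge n\}$. With that convention one checks that the inequalities $g(b) = n$ used above become $g(b) \ge n$, and the contradiction step still goes through: $n \le g(b) \le g(c) < h(c)$, and since $c$ is chosen minimal with $h(c) \ge n$ one still extracts $b > c$ after a short case check. I would state the chosen convention explicitly at the start of the proof, then present the contradiction argument once and invoke it twice. Everything else is a routine chase through the defining inequalities.
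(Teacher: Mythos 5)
Your argument is correct and rests on the same underlying fact as the paper's (monotonicity is preserved under inversion of strictly increasing functions), but you package it as a proof by contradiction while the paper argues directly: the paper observes $f^{-1}(f(n)) = g^{-1}(g(n)) = h^{-1}(h(n)) = n$, then applies $g^{-1}$ and $h^{-1}$ to $f(n) < g(n) < h(n)$ to get $f^{-1}(f(n)) > g^{-1}(f(n)) > h^{-1}(f(n))$, and re-labels $f(n)$ as $n$ using that $f$ hits every sufficiently large value. Your contradiction framing is cleaner to write out and avoids the re-labelling step. Your instinct to worry about what ``$f^{-1}$'' means when the domain is the integers or the functions are not surjective is a legitimate one that the paper does not address; however, the claimed ``short case check'' for the $\min\{m : f(m) \ge n\}$ convention does not actually go through as stated. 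With that convention the strict inequality in the lemma can simply fail: for instance with $g(m)=2m$, $h(m)=2m+1$, and $n=4$, one gets $g^{-1}(4)=h^{-1}(4)=2$ even though $g<h$ pointwise. The chain $n \le g(b) \le g(c) < h(c)$ gives no contradiction because $h(c) \ge n$, not $h(c)=n$, under the min-convention. So either restrict to continuous strictly increasing surjections (where your core contradiction argument is airtight and matches what the paper implicitly assumes), or weaken the conclusion to $\ge$ in the discrete setting.
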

\begin{proof}
Assume that $f(n)$, $g(n)$, and $h(n)$ are unbounded monotonically increasing functions with $ f(n) < g(n) < h(n) $. Note that this implies that $f^{-1}(n)$, $g^{-1}(n)$, and $h^{-1}(n)$ are also unbounded monotonically increasing functions. It is clear that 
$$
f^{-1}(f(n)) = g^{-1}(g(n)) = h^{-1}(h(n)).
$$
However, since $f(n) < g(n) < h(n)$, then we obtain that 
$$
f^{-1}(f(n)) > g^{-1}(f(n)) > h^{-1}(f(n))
$$
which implies that
$$
f^{-1}(n) > g^{-1}(n) > h^{-1}(n).
$$
\end{proof}
Using this lemma, we will prove the following proposition which is the key concept behind proving or disproving Erd\H{o}s's conjecture.
\begin{proposition}\label{bounds}
If $A \subset \mathbb{N}$ is an infinite set where $f(n)$ and $h(n)$ are respective lower and upper bounds on the cardinality of $A$ up to $n$, then $f^{-1}(a)$ and $h^{-1}(a)$ are respective upper and lower bounds on the size of the $a^{th}$ element in $A$.
\end{proposition}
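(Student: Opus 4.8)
The plan is to unwind the definitions and translate the statement about $f$ and $h$ as bounds on a counting function into a statement about the inverse functions bounding the position of the $a^{th}$ element. First I would set up notation: let $a_1 < a_2 < a_3 < \cdots$ enumerate the elements of $A$ in increasing order, and for each $n$ let $C_A(n) = |A \cap \{1, 2, \dots, n\}|$ denote the counting function of $A$ up to $n$. The hypothesis says $f(n) \le C_A(n) \le h(n)$ for all $n$ (or for all sufficiently large $n$), where $f$ and $h$ are unbounded monotonically increasing functions so that Lemma \ref{inverses} applies. The quantity I want to bound is $a_a$, the $a^{th}$ element of $A$, and the claim is $h^{-1}(a) \le a_a \le f^{-1}(a)$.

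The key observation is the duality between the counting function and the enumeration: $C_A(a_a) = a$, since there are exactly $a$ elements of $A$ that are at most $a_a$. Applying the hypothesis at $n = a_a$ gives $f(a_a) \le a \le h(a_a)$. Now I would apply the inverse functions, using monotonicity: from $f(a_a) \le a$ and the fact that $f^{-1}$ is increasing, I get $a_a \le f^{-1}(a)$ (being slightly careful about whether $f^{-1}(f(a_a))$ equals $a_a$ exactly or only bounds it, which is why phrasing $f$ and $h$ as genuine monotone functions with well-defined inverses matters — Lemma \ref{inverses} is invoked precisely to guarantee these inverses behave correctly and reverse the inequalities). Symmetrically, from $a \le h(a_a)$ I get $h^{-1}(a) \le a_a$. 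Stringing these together yields $h^{-1}(a) \le a_a \le f^{-1}(a)$, which is exactly the assertion that $f^{-1}(a)$ is an upper bound and $h^{-1}(a)$ is a lower bound on the $a^{th}$ element of $A$.

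The main obstacle here is not depth but precision: one has to be careful that "$f(n)$ is a lower bound on the cardinality of $A$ up to $n$" means $f(n) \le C_A(n)$ (not $<$), and that the inverses are defined so that the inequalities transfer cleanly — for instance, if $f$ is only defined on integers or takes integer values, $f^{-1}$ should be interpreted as a suitable generalized inverse so that $f(m) \le a \implies m \le f^{-1}(a)$. Invoking Lemma \ref{inverses} (with, say, $g$ the identity or with $f$ and $h$ directly) packages exactly this inequality-reversal, so the proof reduces to the single identity $C_A(a_a) = a$ followed by two applications of monotone inverses. I would present it in three short steps: (1) introduce the enumeration and note $C_A(a_a) = a$; (2) sandwich $a$ between $f(a_a)$ and $h(a_a)$ using the hypothesis; (3) invert, citing Lemma \ref{inverses}, to sandwich $a_a$ between $h^{-1}(a)$ and $f^{-1}(a)$.
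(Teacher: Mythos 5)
Your proof is correct and follows essentially the same route as the paper's: identify the counting function, observe that it equals $a$ at the $a^{th}$ element, and transfer the bounds through the monotone inverses (the paper phrases this as introducing the ``true growth rate'' $g$ and applying Lemma \ref{inverses} to $f < g < h$). Your version is actually tighter than the paper's own, which leans on informal ``approximately'' language where you instead pin down the exact identity $C_A(a_a) = a$ and flag the generalized-inverse subtlety.
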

\begin{proof}
Assume $A \subset \mathbb{N}$ is an infinite set with $f(n)$ and $h(n)$ as lower and upper bounds on $g(n)$, respectively. Define $g(n)$ to be the true growth rate of the cardinality of $A$. Note that $g(n) = a$ implies that $A$ contains approximately $a$ elements up to $n$. We then bring the readers attention to the fact that $g^{-1}(a) = n$. This implies that the first $a$ elements in $A$ are from the subset of $\{ 1, 2, \dots , n  \}$. In other words, $g^{-1}(a) = n$ implies that the $a^{th}$ element is approximately $n$. We now apply Lemma \ref{inverses} to see that $f^{-1}(a) > g^{-1}(a) > h^{-1}(a)$, which implies that $f^{-1}(a)$ is an upper bound on the $a^{th}$ element in $A$, and likewise, $h^{-1}(a)$ is a lower bound on the $a^{th}$ element in $A$.
\end{proof}
As an example of this proposition we give the following example. Let $A = \{1, 2, 4, 5, 9, 14, 16, \dots \}$ be a random subset of $\mathbb{N}$. Then $g(14) = 6$, and let $f(14) = 5$ and $h(14) = 7$. Then by Lemma  \ref{inverses} we have that 
$$f^{-1}(6) > 14 = f^{-1}(5) = g^{-1}(6) = h^{-1}(7) = 14 > h^{-1}(6).$$ 
\begin{corollary}
If $f_k(n)$ and $h_k(n)$ are lower and upper bounds on $r_k(n)$ respectively, and $A$ is a set whose density is defined by $r_k(n)$, then 
$$
\sum_{n=1}^{\infty} \frac{1}{f^{-1}_k(n)} < \sum_{n=1}^{\infty} \frac{1}{a_n} < \sum_{n=1}^{\infty} \frac{1}{h^{-1}_k(n)} 
$$
are bounds on the sum of the reciprocals of the elements in $A$, where $a_n$ is the $n^{th}$ element in $A$. 
\end{corollary}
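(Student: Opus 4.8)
The plan is to derive the corollary directly from Proposition \ref{bounds} together with the elementary observation that a chain of inequalities between positive real sequences survives termwise reciprocation and summation. First I would record the standing hypotheses needed to invoke the earlier machinery: $f_k$, $r_k$, and $h_k$ are monotonically increasing and unbounded (since $r_k(n)\to\infty$ and $f_k(n) < r_k(n) < h_k(n)$ with $f_k$ itself unbounded), so each admits a monotonically increasing unbounded inverse and Lemma \ref{inverses} and Proposition \ref{bounds} apply with $g = r_k$.

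Second, I would apply Proposition \ref{bounds} to the set $A$, whose counting function is $r_k(n)$, with lower bound $f_k(n)$ and upper bound $h_k(n)$. The proposition then says that $f_k^{-1}(n)$ is an upper bound and $h_k^{-1}(n)$ is a lower bound on $a_n$, the $n^{th}$ element of $A$; that is,
\[
h_k^{-1}(n) \;<\; a_n \;<\; f_k^{-1}(n) \qquad \text{for every } n \ge 1 .
\]
Since all three quantities are positive, taking reciprocals reverses the inequalities, giving $1/f_k^{-1}(n) < 1/a_n < 1/h_k^{-1}(n)$, and summing over $n \ge 1$ preserves this, which is exactly the claimed chain. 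I would then stress the intended reading: the content is a comparison test — if $\sum_n 1/f_k^{-1}(n)$ diverges then so does $\sum_n 1/a_n$, and if $\sum_n 1/h_k^{-1}(n)$ converges then so does $\sum_n 1/a_n$ — which is the form in which it feeds into Erd\H{o}s's conjecture once $f_k$ and $h_k$ are pushed close to $r_k$.

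The one genuinely delicate point, which I would address explicitly, is pinning down the phrase ``a set whose density is defined by $r_k(n)$'' and the word ``approximately'' carried over from Proposition \ref{bounds}: to make $g(n) = r_k(n)$ literally the counting function of a fixed infinite set one has to commit to a particular (near-)extremal $k$-AP-free set, and then check that the bounded additive slack in the counting function — hence bounded multiplicative slack in $a_n$ — does not change whether $\sum 1/a_n$ converges. I expect this bookkeeping, rather than any inequality, to be the main obstacle, and would handle it by fixing an explicit sequence of extremal sets (or their greedy limit) and noting that replacing $a_n$ by $\Theta(a_n)$ leaves the convergence behavior of the reciprocal series unchanged.
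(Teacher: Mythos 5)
Your proof takes essentially the same route as the paper: invoke Proposition \ref{bounds} to sandwich $a_n$ between $h_k^{-1}(n)$ and $f_k^{-1}(n)$, reciprocate termwise, and sum. The additional caveat you raise about making ``a set whose density is defined by $r_k(n)$'' precise and handling the ``approximately'' in Proposition \ref{bounds} is a genuine gap in the paper's own exposition that the paper does not address; your proposed fix (fix an extremal set and note that $\Theta$-equivalence of $a_n$ preserves convergence behavior) is the right way to patch it.
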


\begin{proof}
Proposition \ref{bounds} gives us that since $f_k(n)$ and $h_k(n)$ are lower and upper bounds on $r_k(n)$ respectively, then $f^{-1}_k(n)$ and $h^{-1}_k(n)$ are upper and lower bounds on the size of the $n^{th}$ element in a set $A \subset \mathbb{N}$ that does not contain a $k$-AP, then  
$$
f^{-1}_k(n) > a_n > h^{-1}(n) \implies \frac{1}{f^{-1}_k(n)} < \frac{1}{a_n} < \frac{1}{h^{-1}_k(n)}
$$
which consequently implies
$$
\sum_{n=1}^{\infty} \frac{1}{f^{-1}_k(n)} < \sum_{n=1}^{\infty} \frac{1}{a_n} < \sum_{n=1}^{\infty} \frac{1}{h^{-1}_k(n)}. 
$$
\end{proof}
Thus, the key to proving or disproving Erd\H{o}s's conjecture is in studying and tightening the bounds on $r_k(n)$. At this point some analytic tools will help us identify the growth rate of $r_k(n)$ required to prove or disprove the conjecture.

The following result is not difficult to show using Cauchy's Condensation test, Hardy proves it in his textbook \emph{Course of Pure Mathematics} on page 376. The result states that for a large enough $N \in \mathbb{N}$,
$$
\sum_{n=N}^{\infty}\frac{1}{n(\ln n)(\ln \ln n)\dots(\underbrace{\ln \ln \dots \ln n}_{d\text{ times}})^s}
$$
converges for $s>1$ and diverges otherwise. The following two theorems hold true for defined bounds in theorem. However, it is clear that both cannot be true, thus only one of the theorems hold true in relation to $r_k(n)$. For more detailed results on converging and diverging series as related to densities we refer the reader to a recent paper by Niculescu and Pr\v{a}jitur\v{a}(cite).
\begin{theorem}\label{False}
There exists positive integers $d$, $k$, and a constant $c$ for which,
$$
\frac{c \cdot n}{(\ln n)(\ln \ln n)\dots(\underbrace{\ln \ln \dots \ln n}_{d\text{ times}})} \leq r_k(n)
$$
if and only if there exists a large set $A$ that does not contain $k$-APs. 
\end{theorem}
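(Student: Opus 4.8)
The plan is to establish the two implications of the biconditional separately, using the Corollary that immediately precedes the statement together with the Cauchy-condensation criterion quoted above: for each fixed depth $d$ of iteration, the series $\sum_n 1/\bigl(n(\ln n)(\ln\ln n)\cdots(\underbrace{\ln\ln\cdots\ln n}_{d\text{ times}})^s\bigr)$ diverges precisely when $s\le 1$. Throughout, write $L_j(n)$ for the product of the first $j$ iterated logarithms of $n$, so the inequality to be characterised is $c\,n/L_d(n)\le r_k(n)$.

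For the forward implication, assume $d$, $k$, and $c$ are such that $f_k(n):=c\,n/L_d(n)\le r_k(n)$ for all large $n$. Let $A$ be a $k$-AP free set whose counting function is asymptotic to $r_k(n)$ --- the set featuring in the preceding Corollary --- and let $a_n$ denote its $n$-th element. Since $f_k$ is an unbounded, eventually increasing lower bound for $r_k$, the Corollary yields $\sum_n 1/a_n\ge \sum_n 1/f_k^{-1}(n)$. The only genuine computation is the asymptotics of $f_k^{-1}$: solving $c\,m/L_d(m)=n$ gives $m=n\,L_d(m)/c$, and since $\ln m\sim\ln n$ forces every iterated logarithm of $m$ to be asymptotic to the corresponding one of $n$, this gives $f_k^{-1}(n)\sim n\,L_d(n)/c$, hence
\[ \sum_n \frac{1}{f_k^{-1}(n)} \;\sim\; c\sum_n\frac{1}{n\,L_d(n)} \;=\; \infty \]
by the condensation criterion with $s=1$. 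Therefore $\sum_n 1/a_n=\infty$, so $A$ is a large $k$-AP free set.

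For the converse, assume $A'$ is a large $k$-AP free set with $n$-th element $a'_n$ and counting function $g(n)=|A'\cap\{1,\dots,n\}|$. Then $g(n)\le r_k(n)$, so $g$ is an unbounded non-decreasing lower bound for $r_k$ with $g^{-1}(n)=a'_n$; applying the preceding Corollary with $f_k$ replaced by $g$ gives $\sum_n 1/a_n\ge\sum_n 1/a'_n=\infty$, where $a_n$ is again the $n$-th element of the extremal-density set $A$. Thus the reciprocal sum of the extremal set diverges, and it remains to read off a lower bound on $r_k(n)=|A\cap\{1,\dots,n\}|$ of the advertised shape. The plan here is to run the condensation criterion in the opposite direction: because $\sum_n 1/a_n$ diverges while the series over $n(\ln n)\cdots(\underbrace{\ln\cdots\ln n}_{j\text{ times}})^s$ converges for every $s>1$ and every depth $j$, the sequence $(a_n)$ can grow no faster --- up to a multiplicative constant --- than $n\,L_d(n)$ for \emph{some} finite $d$; fixing such a $d$ and the corresponding constant then converts $a_n\le c^{-1}n\,L_d(n)$ into $r_k(n)\ge c'\,n/L_d(n)$ (with a possibly smaller constant $c'$).

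The main obstacle is precisely this read-off step: inferring from $\sum_n 1/a_n=\infty$ that $a_n=O(n\,L_d(n))$ for a single finite $d$. Divergence alone does not give this for an arbitrary non-decreasing sequence --- one can splice together ever longer stretches on which $a_n$ tracks $n\,L_d(n)$, then $n\,L_{d+1}(n)$, and so on, keeping $\sum 1/a_n$ divergent while outrunning every fixed $d$ --- so the argument has to exploit the extra structure available here: because $(a_n)$ lists a $k$-AP free set of maximal density, the induced counting function $r_k$ is monotone and satisfies $r_k(mn)\le m\,r_k(n)$, which forces $r_k(n)/n$ to be essentially non-increasing and is meant to rule out precisely that sort of splicing. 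Turning this structural fact into the quantitative bound $r_k(n)/n\ge c/L_d(n)$ for some $d$ is the technical heart of the theorem, and the step I would expect to demand the most care.
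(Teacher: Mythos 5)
Your treatment of the forward implication is correct and matches the paper in substance; the paper shows $f_k(g(n))>n$ for the explicit majorant $g(n)=n(\ln n)\cdots(\ln^{(d+1)}n)$ and then invokes divergence of $\sum 1/g(n)$, whereas you compute the sharper asymptotic $f_k^{-1}(n)\sim n L_d(n)/c$ directly and apply the condensation criterion there. Both routes work, and yours is arguably cleaner. (Minor point: there is no single ``extremal-density set $A$'' whose counting function equals $r_k$, since the maximizer of $|A\cap\{1,\dots,n\}|$ can change with $n$; but that imprecision is inherited from the paper's own Proposition \ref{bounds} and Corollary, so I won't count it against you.)

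The genuine gap is exactly where you place it: the converse. You honestly flag that divergence of $\sum 1/a_n$ does not by itself yield $a_n=O(n L_d(n))$ for a single finite $d$, and your splicing example --- a non-decreasing $a_n$ that tracks $n L_1(n)$, then $n L_2(n)$, and so on, keeping the reciprocal sum divergent while eventually outrunning every fixed $nL_d(n)$ --- is a real obstruction to the naive read-off. Your proposed rescue via the submultiplicativity $r_k(mn)\le m\,r_k(n)$ (equivalently, $r_k(n)/n$ essentially non-increasing by Fekete) is not carried out and, as far as I can see, does not by itself close the gap: a function of the form $c/L_{d(n)}(n)$ with $d(n)\to\infty$ very slowly can be essentially non-increasing yet still drop below $c'/L_d(n)$ for every fixed $d$ and $c'$, so subadditivity alone does not force the advertised shape. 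You should also be aware that the paper's own argument for this direction is not a proof either --- it reads ``the backwards implication \ldots becomes an easy consequence of working back from Cauchy's Condensation test'' and then simply asserts the conclusion without addressing the uniformity-in-$d$ issue you identified. So you have not reproduced a proof the paper actually contains; rather, you have pinpointed precisely the step that the paper leaves unjustified, which is the substantively correct reading of where the difficulty lies.
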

\begin{proof}
Assume that $f_k(n)$ is a lower bound for $r_k(n)$ such that
$$
f_k(n) = \frac{c \cdot n}{(\ln n)(\ln \ln n)\dots(\underbrace{\ln \ln \dots \ln n}_{d\text{ times}})}.
$$
Obtaining an exact $f^{-1}_k(n)$ solely as a function of $n$ is not possible. However, if for some positive integer $k$, there exists an $n \in \mathbb{N}$ and a function $g(n)$ such that $f_k(g(n)) > n$, for all $n > N$, then it is clear that $g(n) > f^{-1}_k(n)$ for all $n > N$. This implies that $g(n)$ is a worse upper bound on the $n^{th}$ element in a set $A$ whose density is defined by $r_k(n)$. Consider
$$
g(n) = n(\ln n)(\ln \ln n)\dots(\underbrace{\ln \ln \dots \ln n}_{(d+1)\text{ times}}).
$$ 
Then we obtain that 
$$
f_k(g(n)) = \frac{c \cdot g(n)}{(\ln g(n))(\ln \ln g(n))\dots(\underbrace{\ln \ln \dots \ln g(n)}_{d\text{ times}})}.
$$
We can show there exists an $N \in \mathbb{N}$ such that $n < f_k(g(n))$ for all $n > N$, by considering that $g(n) < n^2$ and using this for our denominator in $f_k(g(n))$.
$$
\frac{c \cdot g(n)}{(\ln g(n))(\ln \ln g(n))\dots(\underbrace{\ln \ln \dots \ln g(n)}_{d\text{ times}})} > \frac{c \cdot g(n)}{(\ln n^2)(\ln \ln n^2)\dots(\underbrace{\ln \ln \dots \ln n^2}_{d\text{ times}})}.
$$
Then note that there exists an absolute constant $C$ such that
$$
\frac{c \cdot g(n)}{(\ln n^2)(\ln \ln n^2)\dots(\underbrace{\ln \ln \dots \ln n^2}_{d\text{ times}})} > \frac{c \cdot g(n)}{C \cdot (\ln n)(\ln \ln n)\dots(\underbrace{\ln \ln \dots \ln n}_{d\text{ times}})}.
$$
Recall that we defined $g(n)$ to have $d+1$ iterations of natural log terms. Thus we obtain that 
$$
\frac{c \cdot g(n)}{C \cdot (\ln n)(\ln \ln n)\dots(\underbrace{\ln \ln \dots \ln n}_{d\text{ times}})} = \frac{n(\overbrace{\ln \ln \dots \ln n}^{(d+1)\text{ times}})}{c}
$$
for some new constant $c$. Since the constant does not grow, then we have that there exists an $N \in \mathbb{N}$, such that $f_k(g(n)) > n$ for all $n > N$. This implies that $f^{-1}_k(n) > g(n)$, which further implies that $\frac{1}{g(n)} <\frac{1}{f^{-1}_k(n)}$, for all $n > N$. Thus we obtain that
$$
\sum_{n=N}^{\infty} \frac{1}{g(n)} =  \sum_{n=N}^{\infty} \frac{1}{n(\ln n)(\ln \ln n)\dots(\underbrace{\ln \ln \dots \ln n}_{d\text{ times}})} < \sum_{n=N}^{\infty} \frac{1}{f^{-1}_k(n)}.
$$
Relating back to Cauchy's condensation test, we have that $\sum_{n=N}^{\infty} \frac{1}{g(n)}$ diverges. Thus, $\sum_{n=N}^{\infty} \frac{1}{f^{-1}_k(n)}$ diverges. Thus, for some $k$ there exists large set $A$ that contains no $k$-AP. 

Having concluded the forwards direction of the proof, note that the backwards implication of the proof which assumes that for some $k$, there exists a large set not containing a $k$-AP, becomes an easy consequence of working back from Cauchy's Condensation test. The existence of such an $A$ implies that the growth rate of $r_k(n)$ must atleast be that of our defined $f_k(n)$ for some fixed $d$.  
\end{proof}
\begin{corollary}
If there exists positive integers $d$, $k$, and a constant $c$ for which,
$$
\frac{c \cdot n}{(\ln n)(\ln \ln n)\dots(\underbrace{\ln \ln \dots \ln n}_{d\text{ times}})} \leq r_k(n)
$$
then Erd\H{o}s's conjecture on large sets and arithmetic progressions is false.
\end{corollary}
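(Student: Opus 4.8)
The plan is to obtain this as an essentially immediate consequence of Theorem \ref{False}. First I would observe that the hypothesis of the corollary is \emph{verbatim} the left-hand side of the biconditional in Theorem \ref{False}: it asserts that there exist positive integers $d$, $k$ and a constant $c$ with $\frac{c \cdot n}{(\ln n)(\ln \ln n)\dots(\underbrace{\ln \ln \dots \ln n}_{d\text{ times}})} \leq r_k(n)$. Applying the forward direction of Theorem \ref{False} then produces, for this same $k$, a \textbf{large} set $A \subset \mathbb{N}$ (one whose reciprocal sum $\sum 1/a_n$ diverges) that contains no $k$-term arithmetic progression.

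Next I would recall Erd\H{o}s's conjecture in the form stated at the beginning of Section 3: every large set contains arbitrarily long arithmetic progressions. In particular, any large set must contain a $k$-AP for the specific value of $k$ furnished above. But the set $A$ produced by Theorem \ref{False} is simultaneously large and $k$-AP free, which directly contradicts the conjecture. Hence, under the stated hypothesis, Erd\H{o}s's conjecture cannot hold, i.e.\ it is false. This also matches the remark preceding Theorem \ref{False} that its two companion statements cannot both be true.

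Since this is a corollary, there is no real computational content; the work lies in citing Theorem \ref{False} correctly. The one point I would be careful about is that the quantifiers line up: Theorem \ref{False} yields the $k$-AP free witness for precisely the $k$ appearing in the hypothesis, so $A$ is large \emph{and} avoids $k$-APs for a single fixed $k$, which is exactly the kind of counterexample needed to refute the universal claim ``every large set contains arbitrarily long APs.'' The genuine difficulty is entirely upstream, inside the proof of Theorem \ref{False} (constructing the inverse-bound function $g(n)$ and comparing against Cauchy's condensation test); this corollary merely records the logical payoff.
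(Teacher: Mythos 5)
Your proposal is correct and follows essentially the same route as the paper: invoke the forward direction of Theorem \ref{False} to produce a large $k$-AP-free set, then observe that this directly contradicts Erd\H{o}s's conjecture (or equivalently its contrapositive, which would require such a set to be small). Your added remark about the quantifiers lining up is a sensible elaboration but does not change the argument.
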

\begin{proof}
If such a lower bound for $r_k(n)$ exists, then we use Theorem \ref{False} to claim that there exists a large set $A$ that does not contain $k$-APs. This is contradictory to the contrapositive statement of Erd\H{o}s's conjecture, since such an $A$ is not small.
\end{proof}
\begin{theorem}\label{True}
For all positive integers $k$, there exists a positive integer $d$, a constant $c$, and $s > 1$ for which,
$$
r_k(n) \leq \frac{c \cdot n}{(\ln n)(\ln \ln n)\dots(\underbrace{\ln \ln \dots \ln n}_{d\text{ times}})^s}
$$
if and only if every set $A$ that does not contain a $k$-AP is small.
\end{theorem}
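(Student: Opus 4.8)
The plan is to prove Theorem \ref{True} as the convergence counterpart of Theorem \ref{False}, reusing Proposition \ref{bounds} together with the Cauchy condensation fact quoted just before Theorem \ref{False} (namely that $\sum_{n\ge N}\frac{1}{n(\ln n)(\ln\ln n)\cdots(\underbrace{\ln\cdots\ln n}_{d})^s}$ converges if and only if $s>1$). Throughout, write $a_n$ for the $n$th smallest element of a $k$-AP-free set $A$, and let
$$
h_k(n)=\frac{c\cdot n}{(\ln n)(\ln\ln n)\cdots(\underbrace{\ln\ln\cdots\ln n}_{d\text{ times}})^s},\qquad s>1,
$$
denote the candidate upper bound.

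For the forward implication, assume $r_k(n)\le h_k(n)$. Since $h_k(m)\le m$ for all large $m$ (the iterated-logarithm denominator eventually exceeds $c$), $h_k$ is eventually increasing and $h_k^{-1}(n)\ge n$ for large $n$. By Proposition \ref{bounds}, because $h_k$ bounds the density of $A$ from above, $h_k^{-1}$ bounds the $n$th element from below, so $a_n\ge h_k^{-1}(n)$. Putting $u=h_k^{-1}(n)$, the relation $n=h_k(u)\le\frac{cu}{(\ln u)\cdots(\underbrace{\ln\cdots\ln u}_{d})^s}$ together with $u\ge n$ gives
$$
\frac{1}{a_n}\le\frac{1}{u}\le\frac{c}{n(\ln u)\cdots(\underbrace{\ln\cdots\ln u}_{d})^s}\le\frac{c}{n(\ln n)(\ln\ln n)\cdots(\underbrace{\ln\cdots\ln n}_{d})^s}.
$$
Summing over $n$ and invoking the condensation fact with $s>1$ shows $\sum_n 1/a_n<\infty$, i.e.\ $A$ is small. (Equivalently, the same conclusion follows by Abel summation directly from $|A\cap[1,N]|\le h_k(N)$, bypassing the inverse altogether.)

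For the converse, assume every $k$-AP-free set is small, and apply this to a set $A$ whose counting function realizes the true growth rate of $r_k(n)$; then $\sum_n 1/a_n<\infty$. Running the condensation test in reverse — exactly as in the closing paragraph of the proof of Theorem \ref{False}, but now exploiting the ``only if'' half (divergence at the critical exponent $1$) — the convergence of $\sum_n 1/a_n$ forces $a_n$ to eventually dominate $n(\ln n)(\ln\ln n)\cdots(\underbrace{\ln\cdots\ln n}_{d})^s$ for some fixed $d$ and some $s>1$. Proposition \ref{bounds}, used in the reverse direction (the inverse of a lower bound on $a_n$ is an upper bound on $r_k(n)$), then converts this into the asserted upper bound on $r_k(n)$.

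I expect the converse to be the main obstacle. The forward direction is a routine comparison once monotonicity and invertibility of $h_k$ are noted; the converse rests on the more delicate claim that a strictly increasing integer sequence with convergent reciprocal sum must, in the averaged sense measured by Cauchy condensation, outgrow one of the threshold functions $n(\ln n)\cdots(\underbrace{\ln\cdots\ln n}_{d})^s$ with $s>1$. Making this rigorous rather than heuristic would draw on the comparison results in the cited work of Niculescu and Pr\v{a}jitur\v{a}, and would also require the same care as in Proposition \ref{bounds} about inverting counting functions that need not be strictly monotone.
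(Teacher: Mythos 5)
Your forward direction reaches the same conclusion as the paper's but by a cleaner route. The paper introduces an auxiliary threshold function $g(n)=n(\ln n)\cdots(\underbrace{\ln\cdots\ln n}_{d})^{s-\epsilon}$ with $1<s-\epsilon<s$, verifies $h_k(g(n))<n$ for large $n$ to deduce $g(n)<h_k^{-1}(n)$, and then invokes the condensation fact with exponent $s-\epsilon>1$. You instead substitute $u=h_k^{-1}(n)$ directly into the defining relation $n=h_k(u)$, use the eventual monotonicity of $h_k$ and $u\ge n$ to replace the iterated logarithms of $u$ by those of $n$, and obtain $1/a_n\le c/\bigl(n(\ln n)\cdots(\underbrace{\ln\cdots\ln n}_{d})^{s}\bigr)$, convergent at the original exponent $s$. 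This bypasses the $\epsilon$-trick entirely and gives a tighter comparison; it is a genuine simplification. (One small slip: since $h_k(u)$ equals the displayed expression by definition, your ``$n=h_k(u)\le\cdots$'' should be an equality, though nothing downstream depends on it.)

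On the converse, your write-up matches the paper's in substance and in its weakness: the paper's backward direction is the two-sentence assertion that smallness ``places an automatic upper bound on $r_k(n)$'' by ``working backwards from Cauchy's condensation test,'' with no argument supplied. You reproduce that reasoning but explicitly flag the gap, namely that convergence of $\sum 1/a_n$ does not by itself force $a_n$ to eventually dominate some $n(\ln n)\cdots(\underbrace{\ln\cdots\ln n}_{d})^{s}$ with $s>1$, since the iterated-logarithm scale does not exhaust all convergent series. Your honesty here is a point in your favor: the converse as stated is the genuinely delicate (arguably under-justified) half of the theorem, and you have correctly identified it as such rather than papering over it. In short, you take essentially the paper's approach, with an improved forward argument and a candid acknowledgment of the same unresolved converse.
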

\begin{proof}
Assume that for all $k$, $h_k(n)$ is an upper bound on $r_k(n)$ where
$$
h_k(n) = \frac{c \cdot n}{(\ln n)(\ln \ln n)\dots(\underbrace{\ln \ln \dots \ln n}_{d\text{ times}})^s}.
$$
Similarly as from the proof of Theorem \ref{False}, it is not possible to find an exact inverse function for $h_k(n)$ in terms of solely $n$. In this case, we want a $g(n)$ such that for all positive integers $k$, there exists an $N \in \mathbb{N}$, such that $h_k(g(n)) < n$ for all $n > N$. This would imply that $g(n) < h^{-1}_k(n)$ for all $n > N$. Consider
$$
g(n) = n(\ln n)(\ln \ln n)\dots(\underbrace{\ln \ln \dots \ln n}_{d\text{ times}})^{s - \epsilon}
$$ 
for some $\epsilon > 0$ and $s - \epsilon > 1$. Clearly such an epsilon exists since $s > 1$ and $s \in \mathbb{R}$. Then we obtain that 
$$
h_k(g(n)) = \frac{c \cdot g(n)}{(\ln g(n))(\ln \ln g(n))\dots(\underbrace{\ln \ln \dots \ln g(n)}_{d\text{ times}})^s}.
$$
Expanding $g(n)$ as defined gives us,
$$
\frac{c \cdot g(n)}{(\ln g(n))(\ln \ln g(n))\dots(\underbrace{\ln \ln \dots \ln g(n)}_{d\text{ times}})^s} = \frac{c \cdot (n(\ln n)(\ln \ln n)\dots(\overbrace{\ln \ln \dots \ln n}^{d\text{ times}})^{s - \epsilon})}{(\ln g(n))(\ln \ln g(n))\dots(\underbrace{\ln \ln \dots \ln g(n)}_{d\text{ times}})^s}
$$
Clearly, $\ln g(n) > \ln n$, $\ln \ln g(n) > \ln \ln n$, and so on. Also, since our last term is raised to the $s - \epsilon < s$, then we know that for all positive integers $k$ there exists an $N \in \mathbb{N}$, for which $h_k(g(n)) < n$ for all $n > N$.
Thus $g(n) < h^{-1}_k(n)$ for all $n > N$. This further implies that $\frac{1}{h^{-1}_k(n)} < \frac{1}{g(n)}$. Here we note that 
$$
\sum_{n=N}^{\infty} \frac{1}{g(n)} = \frac{1}{n(\ln n)(\ln \ln n)\dots(\underbrace{\ln \ln \dots \ln n}_{d\text{ times}})^{s - \epsilon}} > \sum_{n =N}^{\infty} \frac{1}{h^{-1}_k(n)}.
$$ 
Since $s - \epsilon > 1$, by Cauchy's condensation test we have that $\sum_{n=N}^{\infty} \frac{1}{g(n)}$ converges. Thus $\sum_{n =N}^{\infty} \frac{1}{h^{-1}_k(n)}$ converges, which implies that every set $A$ that does not contain a $k$-AP is small.

The backwards direction of the double implication also comes from the definitions. If every set $A$ that does not contain a $k$-AP for some $k$, is small, then this places an automatic upper bound on $r_k(n)$ given that too large a density creates a large set. Working backwards from Cauchy's condensation test, we again see that the we can obtain that an upper bound on $r_k(n)$ is atleast that of our defined $h_k(n)$ for some positive integer $d$, and real number $s > 1$. 
\end{proof}
\begin{corollary}
If for all positive integers $k$, there exists a positive integer $d$, a constant $c$, and $s > 1$ for which,
$$
r_k(n) \leq \frac{c \cdot n}{(\ln n)(\ln \ln n)\dots(\underbrace{\ln \ln \dots \ln n}_{d\text{ times}})^s}
$$
then Erd\H{o}s's conjecture on large sets and arithmetic progressions is proven true.
\end{corollary}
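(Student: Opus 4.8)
The plan is to deduce the corollary directly from Theorem \ref{True} together with the contrapositive reformulation of Erd\H{o}s's conjecture recorded at the beginning of this section. Recall that the conjecture asserts that every large set contains arbitrarily long arithmetic progressions, and that its contrapositive reads: if a set $A$ fails to contain a $k$-AP for \emph{some} positive integer $k$, then $A$ is small. So it suffices to establish this contrapositive under the stated hypothesis on $r_k(n)$.

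First I would invoke Theorem \ref{True} in the direction already proved there. The hypothesis of the corollary is exactly the ``for all positive integers $k$'' statement appearing on the left-hand side of the biconditional in Theorem \ref{True}. Hence the theorem immediately yields that for every positive integer $k$, every set $A$ that does not contain a $k$-AP is small.

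Next, let $A \subset \mathbb{N}$ be an arbitrary large set, and suppose toward a contradiction that $A$ does not contain arbitrarily long arithmetic progressions. Then there is some $k$ for which $A$ contains no $k$-AP, and by the previous step this forces $A$ to be small, contradicting the assumption that $A$ is large. Therefore $A$ must contain a $k$-AP for every $k$, i.e.\ arbitrarily long arithmetic progressions, which is precisely the statement of Erd\H{o}s's conjecture.

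I do not expect a genuine obstacle here, since the argument is essentially a matter of tracking quantifiers; the one point that warrants care is their order. Theorem \ref{True} delivers the conclusion ``$A$ is small'' uniformly over all $k$, whereas the contrapositive of the conjecture only needs this for the single $k$ witnessing that $A$ omits a $k$-AP, so the implication goes through with room to spare. I would also note that only the forward half of Theorem \ref{True} is used, so no circularity with its converse direction arises.
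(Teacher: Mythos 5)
Your argument is correct and matches the paper's proof in substance: both invoke only the forward direction of Theorem \ref{True} to conclude that for every $k$ a set omitting a $k$-AP is small, and then identify this with the contrapositive form of Erd\H{o}s's conjecture stated at the start of the section. You simply spell out the quantifier bookkeeping (that the contrapositive needs the conclusion only for the single witnessing $k$, while the theorem supplies it for all $k$) that the paper compresses into one sentence.
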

\begin{proof}
If such an upper bound $r_k(n)$ exists, then we use Theorem \ref{True} to claim that then every set that does not contain a $k$-AP is small. Which is an equivalent statement to that of Erd\H{o}s's conjecture.
\end{proof}
\section{Potential Improvements}
Using the same general idea, we can consider how does using a prime smaller than $k$ affect our construction. It is likely that this would allow us to add elements that would normally be part of the empty blocks between elements in our construction. Although this method would provide a smaller starting bound, it could provide a bound that grows with $n$, which would be an asymptotic improvement over the bound given by Theorem \ref{mainthem}.

\section{Acknowledgments}
I would like to thank Craig Timmons for his insightful comments that helped improve the quality of this paper.

\end{document}